\documentclass[12pt]{article}
\usepackage{amssymb}
\usepackage{array}
\usepackage{latexsym}
\usepackage{enumerate}
\usepackage{amsmath}
\usepackage{amsfonts}
\usepackage{amsthm}
\usepackage{graphicx}
\usepackage{comment}
\usepackage[english]{babel}
\usepackage{geometry}
\geometry{a4paper}

\usepackage{hyperref}
\ifx\smallsetminus\undefined
\def\smallsetminus{\setminus}
\fi
\title{Intersection sets, three-character multisets and associated codes}
\author{A.~Aguglia\footnote{Dipartimento di Meccanica, Matematica e Management, Politecnico di Bari,
  Via Orabona 4, I-70126 Bari}, L.~Giuzzi\footnote{D.I.C.A.T.A.M., Universit\`a
  di Brescia, Via Branze 43, I-25123 Brescia}}
\date{}
\theoremstyle{plain}
\newtheorem{prop}{Proposition}[section]
\newtheorem{theorem}[prop]{Theorem}

\newtheorem{lemma}[prop]{Lemma}
\theoremstyle{definition}
\newtheorem{remark}[prop]{Remark}
\newcommand{\cH}{\mathcal H}
\newcommand{\cC}{\mathcal C}
\newcommand{\cQ}{\mathcal Q}

\newcommand{\cB}{\mathcal B}

\newcommand{\cF}{\mathcal F}

\newcommand{\PG}{\mathrm{PG}}
\newcommand{\AG}{\mathrm{AG}}
\newcommand{\GF}{\mathrm{GF}}
\newcommand{\cV}{\mathcal V}
\newcommand{\rank}{\operatorname{rank}}
\newcommand{\Rad}{\operatorname{Rad}}

\newcommand{\Tr}{\mathrm{Tr}\,}

\begin{document}
\maketitle
\begin{abstract}
In this article  we construct new  minimal intersection sets  in $\AG(r,q^2)$  sporting three intersection numbers with hyperplanes;
we then use these sets to obtain linear error correcting codes with
few weights, whose weight enumerator we  also determine.
Furthermore, we provide  a new
family of three-character multisets in $\PG(r,q^2)$ with $r$ even and we also
compute their weight distribution.
\end{abstract}
\noindent
{\bf Keywords}: Quadric, Hermitian variety, three-character set, multiset, error correcting code, weight enumerator
\par\noindent
{\bf MSC:} 51E20, 94B05

\section{Introduction}

Throughout this paper $q$ is taken to be any prime power.
A set of points $S$ in the  projective space $\PG(r,q^2)$ or in the affine space $\AG(r,q^2)$
 is a {\em $t$-intersection set} or  a {\em
$t$-fold blocking set with respect to hyperplanes} if every hyperplane
contains at least $t>0$ points of $S$.
A point $P$ of  a $t$-intersection set $S$ is said to be
{\em essential} if $S \setminus \{P\}$ is not a $t$-intersection set.
When all points of $S$ are essential then $S$ is {\em minimal}.

An intersection set $S$ in $\PG(r,q^2)$ or in $\AG(r,q^2)$ is an {\em $m$-character set } if the size of the intersection of $S$ with any hyperplane
might assume just one out of $m$ possible different values called {\em the characters} of $S$.

Sets with few characters
are connected with many theoretical and applied areas such as
coding theory, strongly regular graphs, association schemes, optimal multiple coverings, secret sharing; see in particular~\cite{BDG,CK,CP,
CaG,CaK,Cha,Di1,Di} for
applications of $2$-- and $3$--character sets.
For an extensive survey of results on three-character sets, see
also~\cite{Di0} and the references therein.

A {\em multiset} in  $\PG(r,q^2)$ is a mapping
$M : \PG(r,q^2) \rightarrow {\mathbb N}$ from  the points of $\PG(r,q^2)$ into non-negative
integers.
The \emph{points} of a multiset are the points $P$ of $\PG(r,q^2)$
with multiplicity $M(P) > 0$.
Certain multisets arise in various classification problems for optimal linear codes of higher dimension; see~\cite{Di,HK}.

We recall how a linear code in $q^2$ symbols is generated from a (multi)-set $\cV$ of points in $\PG(r,q^2)$.
Fix a reference frame in $\PG(r,q^2)$ and construct a matrix
$G$ by taking as columns the coordinates of the points of $\cV$
suitably normalized. The code $\cC$ having $G$ as generator matrix is
called the \emph{code generated from} $\cV$.

In the case in which $\cV$ is a set of points, that is $G$ does not
contain columns which are scalar multiples of each other, then
$\cC$ is the \emph{projective code generated from } $\cV$.
The spectrum of the intersections of $\cV$ with the
hyperplanes of $\PG(r,q^2)$ provides the list of the weights of the associated
code; we refer to~\cite{TVN} for further details on this
 geometric approach to codes.

As the order of the points in $\cV$ or their normalization
change, it is potentially possible to construct different codes from
the same set of points.  However, all of these are monomially equivalent;
thus, in the following discussion we shall speak of
\emph{the} code associated to a multiset; see~\cite{DoSi}.

The present paper is organized as follows. In Section~\ref{PRE} we recall a non-standard model of $\PG(r,q^2)$ which will be useful for our constructions.
In Section~\ref{s3} we consider  certain affine sets of $\AG(r,q^2)$
which allows to construct
interesting geometric objects with three characters.
Using these sets  in Section ~\ref{s4}  we then construct linear error correcting codes with
four weights and we determine the corresponding weight enumerator.
In Section~\ref{tcha} we present a construction of a $3$-character multiset in $\PG(r,q^2)$, for any $r$ even  and  we determine the weight distribution of the corresponding  three-weight code.

The study of the weights is important, since they measure the efficiency of the code and their knowledge is useful for decoding.

The codes we shall obtain in the present paper are all $q$-{\em divisible} that is they are $q$-ary code whose all non-zero weights are divisible by $q$;
see~\cite{DCS}.

\section{Preliminaries}
\label{PRE}
It is well known that all non--degenerate Hermitian varieties
of $\PG(r,q^2)$ are projectively
equivalent and that they sport just two intersection numbers with
hyperplanes; see~\cite{S}. Thus, non--degenerate Hermitian varieties
are  two-character set.
Quasi--Hermitian varieties $\cV$ of
$\PG(r,q^2)$ are combinatorial objects which have the same size and the
same intersection numbers with hyperplanes as  (non--degenerate) Hermitian
varieties $\cH$.

In~\cite{AA,ACK} new infinite families of quasi--Hermitian
varieties have been constructed by modifying some point-hyperplane
incidences in $\PG(r,q^2)$.
To this purpose, the authors kept the point-set of $\PG(r,q^2)$
but altered the geometry by suitably replacing the subspaces of
higher type.

The  following non-standard model $\Pi$  of $\PG(r,q^2)$, originally introduced in \cite{ACK},
leads to an extension to higher dimensional spaces of Buekenhout-Metz unitals
 and it shall also
be relevant for the current work.

Fix a non-zero element $a\in\GF(q^2)$.
For any choice $\mathbf{m}=(m_1,\ldots,m_{r-1})\in\GF(q^2)^{r-1}$ and
$d\in\GF(q^2)$  let $\cQ_a(\mathbf{m},d)$ denote the quadric of affine equation
\begin{equation}\label{quad}
x_r=a(x_1^2+\ldots+x_{r-1}^2)+m_1x_1+\ldots+m_{r-1}x_{r-1}+d.
 \end{equation}
 Consider now the birational transform $\AG(r,q^2)\to\AG(r,q^2)$ given by
 \[ \varphi_a: (x_1,\ldots,x_{r-1},x_r)\mapsto (x_1,\ldots,x_{r-1},x_r-a(x_1^2+\ldots+x_{r-1}^2)). \]
We can define a new geometry
$\Pi_a$ whose $t$-dimensional subspaces are the image  under $\varphi_a$ of the subspaces of $\AG(r,q^2)$  of dimension $t$ for
$0\leq t\leq r-1$.  As $\varphi_a$ is bijective, $\Pi_a$ is isomorphic
to $\AG(r,q^2)$. In particular, the set of the hyperplanes of $\Pi_a$ corresponds
to the set of all hyperplanes of $\AG(r,q^2)$ through $P_{\infty}(0,0,\ldots,0,1)$ together
with all of the quadrics  $\cQ_a({\mathbf m},d)$.
Completing $\Pi_a$ with its points at infinity in the usual way
we obtain a projective space isomorphic to $\PG(r,q^2 )$.

In \cite{AA}, an extension of Buekenhout-Tits unitals is considered,
 leading to non-isomorphic families of quasi-Hermitian varieties for $q$
 an odd power of $2$. However, we shall not be concerned any further
 with this second construction in the present paper.

\section{$3$-character sets in $\AG(r,q^2)$ }
\label{s3}
In this section we construct an infinite family of minimal intersection sets in $\AG(r,q^2)$ that sport just three intersection numbers.
Fix  a projective frame in $\PG(r,q^2)$ and assume the space to have
homogeneous coordinates $(X_0,X_1,\ldots, X_r)$.
Let $\AG(r,q^2)$ be the affine space obtained by taking as
hyperplane at infinity $\Pi_{\infty}$ of $\PG(r,q^2)$ that of
equation $X_0=0$. Then, the points of $\AG(r,q^2)$ have affine coordinates
$(x_1,x_2,\ldots,x_r)$ where $x_i=X_i/X_0$ for $i\in \{1,\ldots,r\}$.

Consider now the non--degenerate
Hermitian variety $\cH$ with affine equation
\begin{equation}
\label{eqherm}
x_r^q-x_r=(b^q-b)(x_1^{q+1}+\ldots+x_{r-1}^{q+1}),
\end{equation}
where $b\in\GF(q^2)\setminus \GF(q)$. The set of the points at
infinity of $\cH$ is
\begin{equation}
\label{heinf}
\cF=\{(0,x_1,\ldots,x_r)|x_1^{q+1}+\ldots+x_{r-1}^{q+1}=0\};
\end{equation}
that is $\cF$ is a Hermitian cone of $\PG(r-1,q^2)$,
projecting a Hermitian variety of $\PG(r-2,q^2)$ from
the point $P_{\infty}(0,\ldots,0,1)$.
In particular, the
hyperplane $\Pi_{\infty}$ is tangent to $\cH$ at $P_{\infty}$.

For any $a\in\GF(q^2)^*$ and $b\in\GF(q^2)\setminus\GF(q)$,
let $\cB(a,b)$ be the affine algebraic set   of  equation
\begin{equation}\label{eqqh}
x_r^q-x_r+a^q(x_1^{2q}+\ldots+x_{r-1}^{2q})-a(x_1^2+\ldots+x_{r-1}^2)=
(b^q-b)(x_1^{q+1}+\ldots+x_{r-1}^{q+1}).
\end{equation}
It is shown in~\cite{ACK} that $\cB(a,b)$,
together with the points at infinity of $\cH$, as given by
\eqref{heinf}, is a quasi--Hermitian variety $\cV$ of
 $\PG(r,q^2)$ provided that the following
algebraic conditions are satisfied:
for $q$ odd,
$r$ is odd and $4a^{q+1}+(b^q-b)^2 \neq 0$, or
$r$ is even and  $4a^{q+1}+(b^q-b)^2$ is a non--square in $\GF(q)$;
for $q$ even,
   $r$ is odd, or
  $r$ is even and $ \Tr_{q}(a^{q+1}/(b^q+b)^2)=0$.
Here
$\Tr_{q}$ with $q=2^h$,   denotes the absolute trace $\GF(q)\to \GF(2)$ which maps $x\in \GF(q)$
to $x+x^2+x^{2^2}+\ldots+x^{2^{h-1}}$.

We recall that for $r=2$ the condition that $4a^{q+1}+(b^q-b)^2$ is a non-square
in $\GF(q)$ for $q$ odd or $b\not\in\GF(q)$ and $\Tr_{q}(a^{q+1}/(b^q+b)^2)=0$ for $q$ even is known
 as Ebert's discriminant condition see~\cite{AGK,E}.

We shall study the point-set  $\cB(a,b)$
when complementary of conditions of those mentioned above hold.

\begin{table}
\[ \begin{array}{c|c|c|c}
     r & q & 4a^{q+1}+(b^q-b)^2  &\Tr_{q}(a^{q+1}/(b^q+b)^2) \\ \hline
     \text{odd} & \text{odd} & \text{non-zero} & \\
     \text{even} & \text{odd} &  \text{non-square in $\GF(q)$}& \\
     \text{odd} & \text{even} & & \text{Any} \\
     \text{even} & \text{even}& &0 \\
     \end{array} \]
 \caption{Summary of the cases considered in \cite[Theorem 3.1]{ACK} }
\label{tab1}
\end{table}
\begin{table}
\[ \begin{array}{c|c|c|c}
     r & q &4a^{q+1}+(b^q-b)^2 & Tr(a^{q+1}/(b^q+b)^2)   \\ \hline
     \text{odd}& \text{odd} & 0 &   \\
     \text{even}&\text{odd} & 0 &  \\
     \text{even} & \text{odd}&\text{non-zero square in $\GF(q)$} &  \\
     \text{even} & \text{even}& & 1\\
   \end{array} \]
\caption{Summary of the cases considered in Theorem \ref{mb3} and   Theorem \ref{mb1} }
\label{tab2}
\end{table}

\begin{table}[h]
\[ \begin{array}{c|c|c}
     r & q & \text{Case} \\ \hline
     r\equiv1\pmod{4} & q\equiv1\pmod4 & \ref{TA}) \\
     r\equiv1\pmod{4} & q\equiv3\pmod4 & \ref{TA}) \\
     r\equiv3\pmod{4} & q\equiv1\pmod4 & \ref{TA}) \\
     r\equiv3\pmod{4} & q\equiv3\pmod4 & \ref{TB}) \\
     r\equiv0\pmod{2} & q\equiv1\pmod2 & \ref{TC})
   \end{array}\]
 \caption{Cases for Theorem~\ref{t0}; $q$ odd.}
\label{tab3}
\end{table}

We are going to prove the following results.
\begin{theorem}\label{mb3}
\label{t0}
Suppose $q$ to be an odd prime-power and $4a^{q+1}+(b^q-b)^2=0$. Then,
$\cB(a,b)$ is a set of $q^{2r-1}$ points of
$\AG(r,q^2)$ with characters:
\begin{enumerate}
\item\label{TA}
 for $r\equiv 1\pmod 4$ or $r$ odd and $q\equiv 1\pmod 4$
\[ q^{2r-3}-q^{(3r-5)/2}, q^{2r-3}, q^{2r-3}-q^{(3r-5)/2}+q^{3(r-1)/2}; \]
\item\label{TB} for $r\equiv 3\pmod 4$ and $q\equiv 3\pmod 4$
\[q^{2r-3}+q^{(3r-5)/2}-q^{3(r-1)/2}, q^{2r-3}, q^{2r-3}+q^{(3r-5)/2}; \]
\item\label{TC} for $r$ even,
  \[q^{2r-3}-q^{(3r-4)/2}, q^{2r-3}, q^{2r-3}+q^{(3r-4)/2}.\]
 \end{enumerate}
 Furthermore, $\cB(a,b)$ is a minimal  intersection
 set with respect to hyperplanes for $r>2$.
\end{theorem}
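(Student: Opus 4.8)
The plan is to treat the two assertions separately---computing the three intersection numbers and proving minimality---and to reduce both to the study of a single quadratic form over $\GF(q)$. First I would record the cardinality and set up the reduction. Rewriting \eqref{eqqh} as $x_r^q-x_r=c$, where $c$ gathers the remaining terms, one checks that each summand $(b^q-b)x_i^{q+1}$, $-a^qx_i^{2q}$, $ax_i^2$ satisfies $z^q=-z$; hence $c$ always lies in the image $T=\{z:z^q+z=0\}$ of the additive map $x\mapsto x^q-x$, a one-dimensional $\GF(q)$-subspace of $\GF(q^2)$. Thus for every $(x_1,\dots,x_{r-1})$ there are exactly $q$ admissible values of $x_r$, giving $|\cB(a,b)|=q^{2r-1}$. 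For the characters I would split the affine hyperplanes into the \emph{vertical} ones through $P_\infty$, of equation $\alpha_1x_1+\dots+\alpha_{r-1}x_{r-1}=\beta$, and the rest, normalised as $x_r=\gamma_1x_1+\dots+\gamma_{r-1}x_{r-1}+\delta$. On a vertical hyperplane the variable $x_r$ stays free, so the count is $q\cdot q^{2(r-2)}=q^{2r-3}$, already producing the middle character.

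For a non-vertical hyperplane I would substitute $x_r=\sum_i\gamma_ix_i+\delta$ into \eqref{eqqh}. The resulting expression again lands in $T$, so dividing by a fixed generator of $T$ turns it into a single $\GF(q)$-equation $Q(x)+L(x)+c_0=0$ in the $2(r-1)$ coordinates of $x_1,\dots,x_{r-1}$ over $\GF(q)$, where $Q=\sum_ig(x_i)$ with $g(x)=a^qx^{2q}-ax^2-(b^q-b)x^{q+1}$, and $L,c_0$ arise from $(\sum_i\gamma_ix_i+\delta)^q-(\sum_i\gamma_ix_i+\delta)$. Regarding $g$ as a binary form in $x,x^q$, its Gram determinant equals $-\tfrac14\bigl(4a^{q+1}+(b^q-b)^2\bigr)$, which \textbf{vanishes} under our hypothesis; as $a\neq0$, each $g(x_i)$ then has rank $1$, so $Q$ has rank $r-1$ in $2(r-1)$ variables, with radical of dimension $r-1$. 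I would next distinguish two cases according to $L$: if $L$ does not vanish on the radical of $Q$, the equation is solvable freely in one radical direction and contributes exactly $q^{2r-3}$ points (the middle character once more); if $L$ vanishes on the radical, completing the square reduces the count to $q^{r-1}\cdot N'$, where $N'$ is the number of solutions of a \emph{non-degenerate} form of rank $r-1$ equal to a constant $c'$. Substituting the classical solution-count formulas into $q^{r-1}N'$ reproduces the two outer characters, the value $c'=0$ yielding one listed number and $c'\neq0$ the other, with the parity of $r$ selecting the even-rank formula (cases \ref{TA}, \ref{TB}) or the odd-rank formula (case \ref{TC}).

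The \textbf{main computational obstacle} is pinning down the type $\epsilon=\pm1$ of this non-degenerate form, since the sign is exactly what separates cases \ref{TA} and \ref{TB} and what fixes the square/non-square alternative in case \ref{TC}. Because $Q$ is similar to a scalar multiple of $\sum_{i=1}^{r-1}y_i^2$ and the scalar can be absorbed into $c'$, the type is that of a sum of $r-1$ squares, whose Witt class is governed by the image of $-1$ in $\GF(q)^*/(\GF(q)^*)^2$; the outcome therefore depends only on $r\bmod4$ and $q\bmod4$, exactly as in Table~\ref{tab3}. Part of this step is to check that both admissible values of $c'$ are actually attained as $\delta$ ranges over $\GF(q)$, so that all three characters genuinely occur and the answer is independent of the particular $a,b$.

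Finally, for minimality I would use the Buekenhout--Metz-type group of translations $\tau_t:(x_1,\dots,x_{r-1},x_r)\mapsto(x_1+t_1,\dots,x_{r-1}+t_{r-1},\,x_r+\sum_ic_i(t)x_i+d(t))$, where the $c_i(t)$ are chosen to cancel the linear terms produced in each $g(x_i)$ and $d(t)$ restores membership in $\cB(a,b)$; together with the fibre translations $x_r\mapsto x_r+s$ for $s\in\GF(q)$ these act transitively---in fact regularly---on the $q^{2r-1}$ points of $\cB(a,b)$. The minimum character $t$ is the smallest of the three listed numbers and, by the analysis above, is realised only by non-vertical hyperplanes of one distinguished type (determined by $c'$, and by the square class of $c'$ when $r$ is even). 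By transitivity it suffices to exhibit one such hyperplane through a single point, say the origin; there $\delta=0$ and the constant $c'$ depends only on $\gamma_1,\dots,\gamma_{r-1}$, so for $r>2$ the $r-1\ge2$ free parameters leave enough room to satisfy both the radical condition and the minimising value of $c'$. Spreading this minimal hyperplane by $\tau_t$ shows every point lies on a hyperplane meeting $\cB(a,b)$ in exactly $t$ points, hence is essential; the breakdown of this freedom count for $r=2$ is precisely why the statement is restricted to $r>2$.
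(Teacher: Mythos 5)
Your proposal is correct, and for the computation of the characters it is in substance the same route as the paper's: the paper likewise reduces a non-vertical hyperplane section to counting points of a quadric $\cQ$ of $\AG(2r-2,q)$ whose quadratic part drops to rank $r-1$ precisely because $4a^{q+1}+(b^q-b)^2=0$, and whose nondegenerate part is a scalar multiple of $\sum_{i=1}^{r-1}y_i^2$, so that the hyperbolic/elliptic alternative is decided by $r\bmod 4$ and $q\bmod 4$. The difference there is only bookkeeping: the paper classifies the projective closure of $\cQ$ by the rank ($r-1$, $r$ or $r+1$) of its matrix and subtracts $|\cQ_\infty|$, while you complete the square affinely after splitting on whether the linear part vanishes on the radical; the two dichotomies correspond (your case $L|_R\neq 0$ is the paper's $\rank(\cQ)=r$). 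Where you genuinely diverge is minimality: the paper argues by induction that through every point there is an $n$-dimensional affine subspace meeting $\cB(a,b)$ in at most $q^{2n-1}-q^{n-1}$ points, and a counting contradiction then forces some hyperplane through that point to attain a character below $q^{2r-3}$, necessarily the minimum one; you instead use the Buekenhout--Metz-type translation group acting regularly on $\cB(a,b)$ and exhibit one minimal hyperplane through the origin. Both are valid. Your version buys a reduction to a single explicit verification and makes transparent why $r>2$ is required (a single square does not represent both square classes), but the ``enough room'' step must actually be carried out: on the $(r-1)$-dimensional space of admissible $\gamma$ the constant $c'$ is a nonzero multiple of $\sum_i\gamma_i^2$, and for $r-1\geq 2$ a sum of squares attains every value of $\GF(q)$, which is exactly what is needed; note also that in the case $r\equiv 3\pmod 4$, $q\equiv 3\pmod 4$ the minimum is attained at $c'=0$, so the hyperplane $x_r=0$ already suffices. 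One small slip: the summands $-a^qx_i^{2q}$ and $ax_i^2$ do not individually satisfy $z^q=-z$, only their sum does; this does not affect the conclusion that the right-hand side lies in the kernel of the trace.
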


\begin{theorem}\label{mb1}
  Let $r$ be even.
  Suppose that either $q$ is
  odd with $4a^{q+1}+(b^q-b)^2$  a non--zero square in $\GF(q)$ or
  $q$ is even and $\Tr_{q}(a^{q+1}/(b^q+b)^2)=1$.
  Then,  $\cB(a,b)$ is a set of $q^{2r-1}$ points of
  $\AG(r,q^2)$ with characters
  \[q^{2r-3}-q^{r-2}, q^{2r-3}, q^{2r-3}-q^{r-2}+q^{r-1}.\]
  $\cB(a,b)$  is also a minimal intersection set with respect to hyperplanes.
\end{theorem}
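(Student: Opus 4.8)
The plan is to compute the intersection numbers of $\cB(a,b)$ with the affine hyperplanes of $\AG(r,q^2)$ directly from \eqref{eqqh}, using throughout that the expression in \eqref{eqqh} takes values in the one--dimensional $\GF(q)$--space $T=\{z\in\GF(q^2):z^q=-z\}$ of trace--zero elements; equivalently one may exploit that $\cB(a,b)=\varphi_a(\cH\cap\AG(r,q^2))$, so that affine sections of $\cB(a,b)$ correspond to intersections of $\cH$ with the hyperplanes and quadrics of the model $\Pi_a$. First I would record the size. For each of the $q^{2(r-1)}$ choices of $(x_1,\dots,x_{r-1})$ the right--hand side of \eqref{eqqh} lies in $T$, while $x_r\mapsto x_r^q-x_r$ is $\GF(q)$--linear onto $T$ with kernel $\GF(q)$; hence there are exactly $q$ admissible values of $x_r$, giving $|\cB(a,b)|=q\cdot q^{2(r-1)}=q^{2r-1}$.

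Next I would split the affine hyperplanes according to whether or not they pass through $P_\infty(0,\dots,0,1)$. If $H\colon\sum_{i<r}\alpha_ix_i=\beta$ contains $P_\infty$ (the coefficient of $x_r$ vanishing), the constraint on $x_r$ in \eqref{eqqh} is untouched, so each of the $q^{2(r-2)}$ admissible $(x_1,\dots,x_{r-1})$ extends in exactly $q$ ways and $|\cB(a,b)\cap H|=q^{2r-3}$; this is the middle character. If instead the $x_r$--coefficient is non--zero I would normalise $H$ to $x_r=\beta-\sum_{i<r}\alpha_ix_i$ and substitute, so that $x_r^q-x_r$ becomes $\beta^q-\beta-\sum_{i<r}(\alpha_i^qx_i^q-\alpha_ix_i)$ and the section is cut out by a single $T$--valued equation in $x_1,\dots,x_{r-1}$. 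Dividing by a fixed $\theta\in T\setminus\{0\}$ turns this into one $\GF(q)$--valued equation $Q(x)+L(x)=\kappa$, where $Q(x)=\theta^{-1}\sum_{i<r}\bigl(a^qx_i^{2q}-ax_i^2-(b^q-b)x_i^{q+1}\bigr)$, $L$ is $\GF(q)$--linear, and $\kappa=\theta^{-1}(\beta^q-\beta)\in\GF(q)$.

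The heart of the proof, and the step I expect to be the main obstacle, is to identify $Q$---read through a $\GF(q)$--basis of $\GF(q^2)$ as a quadratic form in the $2(r-1)$ real coordinates of $(x_1,\dots,x_{r-1})$---as a nondegenerate hyperbolic form. Since $Q$ is the orthogonal sum of $r-1$ equal binary blocks arising from $a^qx^{2q}-ax^2-(b^q-b)x^{q+1}$, it suffices to treat one block: writing it with $\bar x=x^q$ as $-ax^2-(b^q-b)x\bar x+a^q\bar x^2$ exhibits its discriminant as $4a^{q+1}+(b^q-b)^2$ up to a non--zero square factor, so under the hypothesis---$4a^{q+1}+(b^q-b)^2$ a non--zero square for $q$ odd, respectively $\Tr_{q}(a^{q+1}/(b^q+b)^2)=1$ for $q$ even---each block, hence $Q$, is of plus type. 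Completing the square for $q$ odd, and using the Arf normal form for $q$ even, absorbs $L$ by a translation and reduces the count to $\#\{y:Q(y)=c'\}$ for a hyperbolic form in $2(r-1)$ variables; this equals $q^{2r-3}+q^{r-1}-q^{r-2}$ when $c'=0$ and $q^{2r-3}-q^{r-2}$ when $c'\neq0$. As $\beta$ varies over $\GF(q^2)$ the value $\kappa$, and with it $c'$, runs over all of $\GF(q)$, so both remaining characters $q^{2r-3}-q^{r-2}+q^{r-1}$ and $q^{2r-3}-q^{r-2}$ genuinely occur; with the middle value $q^{2r-3}$ this gives the three claimed characters. The delicate points are the exact determination of the type of $Q$ (in particular the $q$ even case via the trace condition) and the verification that the linear part $L$ is removable.

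Finally, for minimality I would note that the minimum character is $t=q^{2r-3}-q^{r-2}>0$, so $\cB(a,b)$ is a $t$--intersection set, and a point $P$ is essential exactly when some hyperplane through $P$ meets $\cB(a,b)$ in precisely $t$ points. It therefore remains to produce such a minimal hyperplane through every point. I would do this either by transporting the question to one point via the group of elations of $\cH$, conjugated through $\varphi_a$, which acts transitively on $\cB(a,b)$, or by directly counting the plus--type sections (those with $c'\neq0$) incident with a fixed point and checking this number is positive. Because these counts do not degenerate when $r=2$ in the square case, minimality holds for all $r$, without the restriction $r>2$ appearing in Theorem~\ref{mb3}.
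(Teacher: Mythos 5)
Your proposal is correct, and for the character computation it is essentially the paper's own argument in affine dress: both reduce a non-vertical hyperplane section to a single $\GF(q)$-valued quadratic equation in the $2(r-1)$ coordinates of $(x_1,\dots,x_{r-1})$, observe that the quadratic part splits into $r-1$ identical binary blocks, and identify the type of each block via the discriminant $4a^{q+1}+(b^q-b)^2$ (resp.\ the Arf/trace invariant for $q$ even). The paper packages the count projectively --- it classifies the projective closure $\cQ$ as either a parabolic quadric or a point-cone over the hyperbolic $\cQ_\infty$ and takes $N=|\cQ|-|\cQ_\infty|$ --- whereas you absorb the linear part by a translation (legitimate, since the hyperbolic form is nondegenerate so its polar form realises every linear functional) and count level sets $Q(y)=c'$; these are the same computation, and your values $q^{2r-3}-q^{r-2}$ and $q^{2r-3}-q^{r-2}+q^{r-1}$ agree with the theorem. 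One step you should spell out is the ``up to a non-zero square factor'' claim for the block discriminant: passing from the $\GF(q^2)$-coordinates $(x,\bar x)$ to $\GF(q)$-coordinates multiplies the discriminant by $\det(P)^2=4\varepsilon^2$, which is a \emph{non}-square of $\GF(q)$, and this is compensated only because the normalising factor $\theta^{-2}=\varepsilon^{-2}$ is a non-square as well; the two cancel, but neither is individually a square, so the bookkeeping is not automatic. Where you genuinely diverge from the paper is minimality: the paper runs an induction on subspace dimension to show every point lies on a hyperplane attaining the minimum character, while you invoke transitivity of the group obtained by conjugating the translations stabilising $\cH$ through $\varphi_a$. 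This works, but needs the observation that $\varphi_a\circ\tau\circ\varphi_a^{-1}$ is again an \emph{affine} collineation (the quadratic terms $a\sum x_i^2$ and $a\sum(x_i+c_i)^2$ differ by a linear polynomial), since $\varphi_a$ itself is not a collineation and a priori the conjugate need not permute the true hyperplanes of $\AG(r,q^2)$. Your route buys a cleaner, uniform proof of minimality (valid also for $r=2$, consistent with the absence of the restriction $r>2$ in this theorem), at the cost of verifying the group-theoretic claim; the paper's induction is more pedestrian but self-contained.
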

As it can be seen from Tables \ref{tab1} and \ref{tab2}, all of the
possibilities have been accounted for.
For the convenience of the reader, we also summarize the subcases of Theorem~\ref{t0} in Table~\ref{tab3}.

\subsection{Proof of Theorem~\ref{mb3}}
\label{PP3}
Recall that for any quadric $\cQ$, the radical $\Rad(\cQ)$ of $\cQ$
is the subspace
\[ \Rad(\cQ):=\{ x\in\cQ : \forall y\in\cQ, 
\langle x,y\rangle\subseteq\cQ \}, \]
where by $\langle x,y\rangle$ we denote the line through
$x$ and $y$.
It is well known that $\Rad(\cQ)$ is a subspace of $\PG(r,q^2)$.

Assume $\cB(a,b)$ to have Equation \eqref{eqqh}.
It is straightforward to see that $\cB(a,b)$ coincides with the
affine part of the Hermitian variety $\cH$  of equation \eqref{eqherm} in
the space $\Pi_{a}$;
hence,
any hyperplane $\pi_{P_{\infty}}$ of $\PG(r,q^2)$
passing through $P_{\infty}$ meets $\cB(a,b)$ in
$|\cH \cap \pi_{P_{\infty}}|=q^{2r-3}$ points.

Now we are interested in  the possible intersection sizes of $\cB(a,b)$ with
a generic hyperplane
\[ \pi: x_r=m_1x_1+\cdots+m_{r-1}x_{r-1}+d, \]
of $\AG(r,q^2)$ with coefficients $m_1,\ldots,m_r,d\in\GF(q^2)$.
This is the same as to study the intersection of $\cH$ with the
quadrics $\cQ_a(\mathbf{m},d)$ and hence we have to solve the system
  \begin{equation}
    \left\{\begin{array}{l}\label{sist1}
      x_r^q-x_r=(b^q-b)(x_1^{q+1}+\ldots+x_{r-1}^{q+1})   \\
       a(x_1^2+\ldots+x_{r-1}^2)+m_1x_1+\ldots+m_{r-1}x_{r-1}-x_r+d=0
      \end{array}\right.
  \end{equation}
  Recovering $x_r$ from the second equation in \eqref{sist1}
  and replacing its value in the first equation, we obtain the following
\begin{equation}
\begin{array}{l}\label{eqc}
a^q(x_1^{2q}+\ldots+x_{r-1}^{2q})+(b-b^q)(x_1^{q+1}+\ldots+x_{r-1}^{q+1})+m_1^qx_1^{q}+\\
+\ldots+m_{r-1}^qx_{r-1}^{q}+d^q-a(x_1^2+\ldots+x_{r-1}^2)-m_1x_1+\ldots+ \\
-m_{r-1}x_{r-1}-d=0 .
\end{array}
\end{equation}

As $q$ is odd, there is $\varepsilon\in\GF(q^2)\setminus\GF(q)$ such that
$\varepsilon^q=-\varepsilon$.
For any $x\in\GF(q^2)$ write $x=x^0+\varepsilon x^1$ with $x^0,x^1\in\GF(q)$; in this way
the previous Equation~\eqref{eqc} becomes
\begin{multline}
\label{eqodd1}
\displaystyle\sum_{i=1}^{r-1}\left((b^1+a^1)\varepsilon^2(x_i^1)^2+2a^0x_i^0x_i^1+(a^1-b^1)(x_1^0)^2\right)+\sum_{i=1}^{r-1}(m_i^0x_i^1+m_i^1x_i^0)+d^1=0;
\end{multline}
that is the number  $N$ of affine points which lie in
$\cB(a,b) \cap \pi$ is the same as the number of points of the
affine quadric $\cQ$ of $\AG(2r-2,q)$ of Equation~\eqref{eqodd1}.
Following the approach of~\cite{ACK}, in order to compute $N$, we
first count  the number of points of the quadric
at infinity $\cQ_{\infty}:=\cQ\cap\Pi_{\infty}$
of $\cQ$ and then we determine $N=|\cQ|-|\cQ_{\infty}|$.
Observe that the quadric $\cQ_{\infty}$ of $\PG(2r-3,q)$ has
a block matrix of the form
\begin{equation}\label{matr}
A_{\infty}=\begin{pmatrix}
    (a^1-b^1) &a^0 \\
a^0 & (b^1+a^1)\varepsilon^2 &   & & \\
 && \ddots &&  \\
     &  &  & (a^1-b^1) &a^0\\
     &&  &  a^0& (b^1+a^1)\varepsilon^2\\
      \end{pmatrix}
  \end{equation}
   with determinant
   $$\det A_{\infty}=(\varepsilon^2[(a^1)^2-(b^1)^2]-(a^0)^2)^{r-1}.$$
   Since $ (a^0)^2-\varepsilon^2[(a^1)^2-(b^1)^2]=a^{q+1}+(b^q-b)^2/4$ and
     $4a^{q+1}+(b^q-b)^2=0$ we have $\det A_{\infty}=0$. This means
      \[ \det\begin{pmatrix}
        (a^1-b^1) & a^0 \\
        a^0 & (a^1+b^1)\varepsilon^2
      \end{pmatrix}=0, \]
      that is, each of the $2\times 2$ blocks on the main
      diagonal of $A_{\infty}$ has rank $1$.
      Consequently, the rank of the matrix $A_{\infty}$ is exactly $r-1$.

      If $a^1=b^1$, then $a^0=0$, the matrix $A_{\infty}$ is
      diagonal and the quadric $\cQ_{\infty}$ is projectively equivalent to
      \[ (x_1^1)^2+(x_2^1)^2+\cdots+(x_{r-1}^1)^2=0. \]
      Otherwise, take
      \[ M=\begin{pmatrix}
        1 & 0 \\
        -a^0/(a^1-b^1) & 1 \\
        & & \ddots \\
        &&& 1 & 0 \\
        &&&  -a^0/(a^1-b^1) & 1
      \end{pmatrix}; \]
a direct computation proves that
\[ M^TA_{\infty}M=\begin{pmatrix}
  a^1-b^1 & 0 \\
   0 & 0 \\
   && \ddots \\
   &&& a^1-b^1 & 0 \\
   &&& 0 & 0
   \end{pmatrix}. \]
   Hence, $\cQ_{\infty}$ is projectively equivalent to the quadric of
   rank $r-1$ with equation
   \[ (x_1^0)^2+(x_2^0)^2+\cdots+(x_{r-1}^0)^2=0. \]
For $r$ odd,  we have that $\cQ_{\infty}$
is either
\begin{itemize}
\item a cone with vertex $\Rad(\cQ_{\infty})\simeq\PG(r-2,q)$
  and basis a hyperbolic
  quadric $Q^+(r-2,q)$ if $q\equiv 1\pmod 4$ or
  $r\equiv 1\pmod 4$, or
\item a cone with vertex  $\Rad(\cQ_{\infty})\simeq\PG(r-2,q)$
  and basis an elliptic
  quadric $Q^-(r-2,q)$ if $q\equiv 3\pmod 4$ and $r\equiv 3\pmod 4$; see~\cite[Theorem 1.2]{HT}.
\end{itemize}
For $r$ even,  $\cQ_{\infty}$ is a cone with vertex
$\Rad(\cQ_{\infty})\simeq\PG(r-2,q)$  and basis a parabolic
quadric $Q(r-2,q)$.

We now move to investigate the quadric $\cQ$. Clearly, the rank of
its matrix is either
$r-1$, $r$ or $r+1$.

  Write $\Pi_{\infty}=\Sigma\oplus\Rad(\cQ_{\infty})$. As $\Sigma$ is
  disjoint from the radical of the quadratic form inducing
  $\cQ_{\infty}$, we have that $\cQ'_{\infty}:=\Sigma\cap\cQ_{\infty}$ is a nondegenerate
  quadric (either hyperbolic, elliptic or parabolic according to
  the various conditions).

  When $\cQ$ has the same rank  $r-1$ as $\cQ_{\infty}$, we have
  \[\dim\Rad(\cQ)=\dim\Rad(\cQ_{\infty})+1.\] Observe that
  $\Rad(\cQ)\cap\Pi_{\infty}\leq\Rad(\cQ_{\infty})$.
  Thus, $\Rad(\cQ)\cap\Sigma=\{\mathbf{0}\}$ and
  $\Sigma$ is also a direct complement of $\Rad(\cQ)$.
  It follows that $\cQ$ is cone of vertex a
  $\PG(r-1,q)$ and
  basis a quadric of the same kind as $\cQ'_{\infty}$.
  If $\cQ$ has rank $r+1$, then the hyperplane at infinity
  is tangent to $\cQ$; in particular $\cQ$ must have as radical a
  $\PG(r-3,q)$; by \cite[Lemma 1.22]{HT},  the basis $\cQ''$ of $\cQ$ must have
  the same character (elliptic, parabolic  or hyperbolic) as $\cQ'_{\infty}$. 

  In the case in which the matrix of $\cQ$ has rank $r$, $\Rad(\cQ)=\Rad(\cQ_{\infty})$
  and
   $\cQ$ is a cone of vertex a $\PG(r-2,q)$ and basis a parabolic
  quadric $Q(r-1,q)$ for $r$ odd or $\cQ$ is a cone of vertex a $\PG(r-2,q)$
  and basis a hyperbolic  quadric $Q^+(r-1,q)$ or an elliptic
  quadric $Q^-(r-1,q)$ for $r$ even.
We can now write the complete list of sizes for $r$ odd:
    \[ |\cQ_{\infty}|=\frac{q^{2r-3}-1}{q-1}\pm q^{(3r-5)/2};\]
in case $\rank(\cQ)=r-1$, then
    \[ |\cQ|=\frac{q^{2r-2}-1}{q-1}\pm q^{3(r-1)/2}; \]
in case $\rank(\cQ)=r$,
    \[
    |\cQ|=\frac{q^{2r-2}-1}{q-1}; \]
in case $\rank(\cQ)=r+1$, then
    \[ |\cQ|=\frac{q^{2r-2}-1}{q-1}\pm q^{(3r-5)/2}. \]

    In particular, the possible values for $N=|\cQ|-|\cQ_{\infty}|$ are
\[ q^{2r-3}, q^{2r-3}+q^{3(r-1)/2}-q^{(3r-5)/2}, q^{2r-3}-q^{(3r-5)/2} \]
for $q\equiv1\pmod 4$ or $r\equiv 1\pmod 4$ and
\[ q^{2r-3}-q^{3(r-1)/2}+q^{(3r-5)/2}, q^{2r-3}+q^{(3r-5)/2} \]
for $q\equiv 3\pmod4$ and $r\equiv 3\pmod4$.\\
When $r$ is even we get:
    \[ |\cQ_{\infty}|=\frac{q^{2r-3}-1}{q-1};\]
in case $\rank(\cQ)=r-1$ or $\rank(\cQ)=r+1$, then
    \[ |\cQ|=\frac{q^{2r-2}-1}{q-1}; \]
    in case $\rank(\cQ)=r$,
    \[
    |\cQ|=\frac{q^{2r-2}-1}{q-1}\pm q^{(3r-4)/2}. \]
Thus, the possible list of cardinalities for $N=|\cQ|-|\cQ_{\infty}|$ is
\[ q^{2r-3}, q^{2r-3}+q^{(3r-4)/2}, q^{2r-3}-q^{(3r-4)/2}. \]

Now we are going to show that $\cB(a,b)$ is a minimal
  intersection set.  First of all,
  we prove that for any $P\in \cB(a,b)$ there exists a subspace
  $\Lambda_n(P)\simeq\AG(n,q^2)$, $1\leq n\leq r-1$ through $P$
  such that $|\cB(a,b) \cap \Lambda_n(P)|\leq q^{2n-1}-q^{n-1}$.
  The argument is by induction on $n$.
  Assume $n=1$. Then, for any $P \in \cB(a,b)$ there exists at
  least one line $\ell$ through $P$
   such that $|\ell\cap\cB(a,b)|<q$, otherwise $\cB(a,b)$ would contain more than
   $q^{2r-1}$ points.   Suppose now  that
   the result holds for $n=1,\ldots,r-2$, take $P\in \cB(a,b)$ and suppose
   that any hyperplane $\pi$ through $P$ meets $\cB(a,b)$ in at least
   $q^{2r-3}$ points. By induction,  there exists a subspace
   $\pi':=\Lambda_{r-2}(P)\simeq\AG(r-2,q^2)$ through $P$ meeting
   $\cB(a,b)$ in at most $q^{2r-5}-q^{r-3}$
   points. By considering all hyperplanes containing $\pi'$ we get
   $|\cB(a,b)|\geq (q^2+1)(q^{2r-3}-q^{2r-5}+q^{r-3})+ q^{2r-5}-q^{r-3}>
   q^{2r-1}$, a contradiction. Thus,
   through any $P\in\cB(a,b)$ there exists a hyperplane meeting $\cB(a,b)$ in
   $(q^{2r-3}-q^{(3r-5)/2})$ points for $r$ odd or  $(q^{2r-3}-q^{(3r-4)/2})$
   for $r$ even. This implies
   that  $\cB(a,b)$ is a  minimal intersection set for any $r>2$.

\begin{remark}
The quadric $\cQ_a({\mathbf{m}},d)$ of Equation \eqref{quad} shares its
tangent hyperplane at $P_{\infty}$ with the Hermitian variety \eqref{eqherm}.

The problem of the intersection
of the  Hermitian variety $\cH$  with  irreducible quadrics $\cQ$
having the same tangent plane at a common
point $P\in \cQ \cap \cH$ has been considered in detail for $r=3$
in~\cite{AG1,AG2}.
\end{remark}

\subsection{Proof of Theorem~\ref{mb1}}
\label{Ist}

First consider the case $q$ odd. Arguing as in the proof of Theorem \ref{mb3} we have that
any hyperplane $\pi_{P_{\infty}}$ of $\PG(r,q^2)$
passing through $P_{\infty}$ meets $\cB(a,b)$ in
$q^{2r-3}$ points.

In order to determine the possible intersection sizes of $\cB(a,b)$ with a hyperplane which does not pass through
$P_{\infty}$, say  $\pi: x_r=m_1x_1+\cdots+m_{r-1}x_{r-1}+d $, we need to compute the number $N$
 of affine points of the quadric $\cQ$ in $\AG(2r-2,q)$ with equation \eqref{eqodd1}.
We first discuss the nature of $\cQ_{\infty}=\cQ \cap \Pi_{\infty}$ whose associated matrix $A_{\infty}$ is of the form
\eqref{matr}.

Observe that, under our assumptions, for $q$ odd
$(-1)^{r-1}\det A_{\infty}$ is always a square in $\GF(q)$; hence,
$\cQ_{\infty}$ is a hyperbolic quadric of $\PG(2r-3)$.

 For $q$ even, choose $\varepsilon \in \GF(q^2) \setminus
 \GF(q)$ such that $\varepsilon^2+\varepsilon+\nu=0$, for some
 $\nu\in \GF(q)\setminus \{1\}$ with $\Tr_{q}(\nu)=1$.
  Then, $\varepsilon^{2q}+\varepsilon^q+\nu=0$. Therefore,
 $(\varepsilon^q+\varepsilon)^2+(\varepsilon^q+\varepsilon)=0$,
 whence $\varepsilon^q+\varepsilon+1=0$. With
 this choice of $\varepsilon$, the system given by~\eqref{eqqh}
 and~\eqref{quad}
 reads as
\begin{equation}
\begin{array}{l}
\label{eqeven1}
(a^1+b^1)(x_1^0)^2+[(a^0+a^1)+\nu(a^1+b^1)](x_1^1)^2+b^1x_1^0x_1^1+m_1^1x_1^0+(m_1^0+m_1^1)x_1^1\\
+\ldots
+(a^1+b^1)(x_{r-1}^0)^2+[(a^0+a^1)+\nu(a^1+b^1)](x_{r-1}^1)^2
+b^1x_{r-1}^0x_{r-1}^1\\
+m_{r-1}^1x_{r-1}^0
+(m_{r-1}^0+m_{r-1}^1)x_{r-1}^1+
d^1=0.
 \end{array}
 \end{equation}
The discussion of the (possibly degenerate) quadric $\cQ$ of Equation
\eqref{eqeven1} may be carried out in close analogy to what has been done
before.

Observe however that, as also pointed out in
the remark before \cite[Theorem 1.2]{HT},
some  caution is needed when quadrics
are studied and classified in even characteristic. Indeed, let
\[ A_{\infty}=\begin{pmatrix} 2(a^1+b^1) & b_1 \\
                                b_1   & 2((a^0+a^1)+\nu(a^1+b^1)) \\
                              & & \ddots \\
                            \end{pmatrix} \]
                          be the formal matrix
associated to the quadric $\cQ_{\infty}$ of equation
\[
(a^1+b^1)(x_1^0)^2+[(a^0+a^1)+\nu(a^1+b^1)](x_1^1)^2+b^1x_1^0x_1^1+\ldots\]
\[+(a^1+b^1)(x_{r-1}^0)^2+[(a^0+a^1)+\nu(a^1+b^1)](x_{r-1}^1)^2+b^1x_{r-1}^0x_{r-1}^1=0.\]
Its determinant is equal to
$$\det A_{\infty}=[4(a^1+b^1)(a^0+a^1+\nu(a^1+b^1))+(b^1)^2]^{r-1}.$$
In order to encompass the case $q$ even,
$\det A_{\infty} $ needs to be regarded
as a polynomial function in the ring $\mathbb{Z}[z_0,z_1,z_2,z_3]$ where  the terms
 $(a^0,a^1,b^0,b^1)$ are replaced by indeterminates $z_0,z_1,z_2,z_3$; then we regard it over
$\GF(q)$ for $(z_0,z_1,z_2,z_3)=(a^0,a^1,b^0,b^1)$.
This gives $\det A_{\infty} = b_1^{2(r-1)}$.
Here $b_1\neq 0$ since, by our assumption, $b^q\neq b$.
From~\cite[Theorem 1.2 (i)]{HT}, the quadric $\cQ_\infty$
must be non-degenerate.
Furthermore, by~\cite[Theorem 1.2 (ii)]{HT} and the successive Lemma
22.2.2 the nature of $\cQ_\infty$ can be ascertained as follows.
Let
$B$ the matrix
\begin{equation}\label{matr}
B=\begin{pmatrix}
    0 &b^1 \\
-b^1 & 0 &   & & \\
 && \ddots &&  \\
     &  &  & 0 &b_1\\
     &&  &  -b^1& 0\\
      \end{pmatrix}
  \end{equation}
and define
$$\alpha=\frac{\det B-(-1)^{r-1}\det A_{\infty}}
{4\,\det B}.$$
A straightforward computation shows that
$$\alpha=\frac{(b^1)^{2(r-1)}+ (4(a^1+b^1)(a^0+a^1+\nu(a^1+b^1))+(b^1)^2)^{r-1}}
{4\,(b^1)^{2(r-1)}}.$$
Here $\alpha$ has to be regarded 
as the quotient of two polynomials in $\mathbb{Z}[z_0,z_1,z_2,z_3]$  where  the terms
$(a^0,a^1,b^0,b^1)$ are replaced by indeterminates $z_0,z_1,z_2,z_3$ and, then, evaluated it over
$\GF(q)$ for $(z_0,z_1,z_2,z_3)=(a^0,a^1,b^0,b^1)$.
In particular, we get
$$\alpha=\frac{(a^1+b^1)(a^0+a^1+\nu(a^1+b^1))}{(b^1)^2}.$$
Arguing as in~\cite[p. 439]{ACK}, we see that
$\Tr_{q}(\alpha)=0$ and, hence, $\cQ_{\infty}$
is hyperbolic also for $q$ even.

We investigate
the possible nature of $\cQ$ in either case $q$ odd and $q$ even.
Suppose $\cQ$ to be non-singular; then $\cQ$ is a parabolic quadric and
\[N =|\cQ|-|\cQ_{\infty}|=\frac{(q^{r-1}+1)(q^{r-1}-1)}{q-1} -
  \frac{(q^{r-1}+1)(q^{r-2}-1)}{q-1}=q^{r-2}(q^{r-1}+1).\]
   If $\cQ$ is singular, then $\cQ$ is a cone  with vertex a point and basis a hyperbolic quadric; thus
   \[N= |\cQ|-|\cQ_{\infty}|=\frac{q(q^{r-1}+1)(q^{r-2}-1)}{q-1}-\frac{(q^{r-1}+1)(q^{r-2}-1)}{q-1}+1=\]
   \[=q^{r-2}(q^{r-1}+1)-q^{r-1}.\]
This gives the possible intersection numbers.

Finally, in order to show that $\cB(a,b)$ is a minimal
$(q^{2r-3}-q^{r-2})$--fold blocking set we can use the same techniques as
those adopted to prove that
$\cB(a,b)$ is a minimal blocking set in Theorem~\ref{mb3}.

\section{$4$-weight $q$-ary codes}
\label{s4}

Throughout this section $q$ is and odd prime power and $4a^{q+1}+(b^q-b)^2=0$ for any
$a\in \GF(q^2)^*$ and $b\in \GF(q^2)\setminus \GF(q)$. Let $\cB(a,b)$ the affine set of equation
\eqref{eqqh}. We prove the following theorem.
\begin{theorem}
  \label{mb3p}
  The points of $\cB(a,b)$ determine a projective code $\cC$ of
  length $n=q^{2r-1}$, dimension $k=r+1$ and weight enumerator
  $w(x):=\sum_i A_ix^i$ where
  \[ A_0=1,\qquad A_{q^{2r-1}}=(q^2-1) \]
  and all of the remaining $A_i$'s are $0$ with the exception of
  \begin{itemize}
  \item
   for $r\equiv 1\pmod 4$ or $r$ odd and $q\equiv 1\pmod 4$,
\[ A_{q^{2r-1}-q^{2r-3}-q^{3(r-1)/2}+q^{(3r-5)/2}}=(q^{r+1}-q^r)(q^2-1)\]
    \[ A_{q^{2r-1}-q^{2r-3}}=q^{2r}-q^2+(q^{2r}-q^{r+1})(q^2-1), \quad
     A_{q^{2r-1}-q^{2r-3}+q^{(3r-5)/2}}=q^{r+2}-q^r; \]

   \item for $r\equiv 3\pmod 4$ and $q\equiv 3\pmod 4$,
  \[ A_{q^{2r-1}-q^{2r-3}-q^{(3r-5)/2}}=(q^{r+1}-q^r)(q^2-1),\quad
     A_{q^{2r-1}-q^{2r-3}}=q^{2r}-q^2-q^{r+1}(q^2-1) \]
     \[ A_{q^{2r-1}-q^{2r-3}+q^{3(r-1)/2}-q^{(3r-5)/2}}=q^{r+2}-q^r; \]

   \item  for $r$ even,
\[ A_{q^{2r-1}-q^{2r-3}+q^{(3r-4)/2}}=A_{q^{2r-1}-q^{2r-3}-q^{(3r-4)/2}}=\frac{1}{2}(q^{r+1}-q^r)(q^2-1), \]
\[ A_{q^{2r-1}-q^{2r-3}}=q^{2r}+q^{r+2}-q^{r}-q^2+(q^{2r}-q^{r+1})(q^2-1). \]
\end{itemize}
In particular, each of these codes has exactly $4$ non-zero weights.
\end{theorem}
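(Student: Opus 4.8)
The plan is to translate the purely geometric intersection data of Theorem~\ref{t0} into the weight distribution of the associated projective code via the standard correspondence between codewords and hyperplanes. Recall that for a projective code generated from a point-set $\cV\subseteq\PG(r,q^2)$ of size $n$, the nonzero codewords correspond to the functionals on $\PG(r,q^2)$, up to the $q^2-1$ nonzero scalars: a codeword arising from a hyperplane $\pi$ has weight $n-|\cV\cap\pi|$, where $|\cV\cap\pi|$ counts the points of $\cV$ lying on $\pi$. Thus the first step is to set $n=q^{2r-1}=|\cB(a,b)|$ and $k=r+1$ (the dimension equals the rank of the generator matrix, which is $r+1$ because $\cB(a,b)$ spans $\PG(r,q^2)$), and then to express each weight $w$ as $w=q^{2r-1}-N$, where $N$ ranges over the three affine characters of Theorem~\ref{t0} together with the ``hyperplane through $P_\infty$'' value $q^{2r-3}$.

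The heart of the computation is bookkeeping: I must partition the $\frac{q^{2(r+1)}-1}{q^2-1}$ hyperplanes of $\PG(r,q^2)$ according to which intersection value $N$ they realize, then multiply each class size by $q^2-1$ to pass from hyperplanes to codewords (since each hyperplane yields $q^2-1$ scalar multiples). The plan is first to single out the hyperplanes passing through $P_\infty(0,\dots,0,1)$: by the argument in Section~\ref{PP3} these all meet $\cB(a,b)$ in exactly $q^{2r-3}$ points, giving weight $q^{2r-1}-q^{2r-3}$, and their number is $\frac{q^{2r}-1}{q^2-1}$. The remaining hyperplanes are the affine ones $\pi:x_r=m_1x_1+\cdots+m_{r-1}x_{r-1}+d$, of which there are exactly $q^{2r}$ (one for each choice of $(m_1,\dots,m_{r-1},d)\in\GF(q^2)^r$); these distribute among the three affine characters. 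For the affine hyperplanes the proof of Theorem~\ref{t0} shows that the value of $N$ is governed by the rank of the quadric $\cQ$ of Equation~\eqref{eqodd1}, so I would count how many of the $q^{2r}$ parameter choices give $\rank(\cQ)=r-1$, $r$, or $r+1$. The hyperplane $X_0=0$ at infinity itself accounts for the term $A_{q^{2r-1}}=q^2-1$, corresponding to the maximal-weight words where $|\cV\cap\Pi_\infty|=0$.

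The main obstacle will be obtaining the exact cardinalities of the three rank classes among the affine hyperplanes, i.e. the enumeration of the parameter vectors $(\mathbf m,d)$ for which the associated affine quadric is a cone of rank $r-1$, a rank-$r$ quadric, or a tangent rank-$(r+1)$ quadric. This is a counting problem over $\GF(q)$ once Equation~\eqref{eqodd1} is rewritten, and it naturally splits into the three cases of Theorem~\ref{t0} (according to $r\bmod 4$ and $q\bmod 4$, or $r$ even), since the nondegenerate base quadric type differs in each. For $r$ even the hyperbolic and elliptic base quadrics occur symmetrically, which is why the two nonzero ``$\pm$'' weights share the common coefficient $\frac12(q^{r+1}-q^r)(q^2-1)$; in the odd cases one of the two $\pm$ branches is forbidden, so only one off-center weight survives. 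I would verify all the $A_i$ by the two consistency checks $\sum_i A_i=q^{2(r+1)}=q^{2k}$ (the total number of codewords) and the first MacWilliams-type identity $\sum_i A_i\, i=n\,(q^2-1)\,q^{2k-2}$ (the average-weight relation), which pin down the class sizes and confirm that exactly four distinct nonzero weights remain.
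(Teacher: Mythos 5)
Your strategy coincides with the paper's: pass from hyperplanes to codewords (each hyperplane $\pi$ contributing $q^2-1$ words of weight $q^{2r-1}-|\cB(a,b)\cap\pi|$), isolate the hyperplane at infinity (weight $q^{2r-1}$) and the hyperplanes through $P_\infty$ (character $q^{2r-3}$), and classify the $q^{2r}$ affine hyperplanes $x_r=m_1x_1+\cdots+m_{r-1}x_{r-1}+d$ by the rank of the quadric $\cQ$ of Equation~\eqref{eqodd1}. Your use of the first power-moment identity to force the hyperbolic/elliptic classes to be equal when $r$ is even is exactly the paper's double count of point--hyperplane incidences, which yields $\sigma^+=\sigma^-=\tfrac12(q^{r+1}-q^r)$. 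So the route is the same.

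The gap is that the step you defer as ``the main obstacle'' \emph{is} the proof: the coefficients in the statement are precisely the class sizes $q^r$, $q^{r+1}-q^r$ and $q^{2r}-q^{r+1}$ for ranks $r-1$, $r$ and $r+1$, and you give no argument producing these numbers. The paper's Lemma~\ref{corol} obtains them by analysing the bordered matrix $A$ of $\cQ$: since each $2\times 2$ diagonal block of $A_\infty$ has rank one, $\rank A=r-1$ forces the border column into the column space of $A_\infty$ (so $m_i^1=\lambda m_i^0$ for all $i$, with $d^1$ then determined), giving $q^r$ hyperplanes; $\rank A=r+1$ requires the border column to be independent of the columns of $A_\infty$ and the last row independent of the first $2r-2$, giving $q^2(q^{2r-2}-q^{r-1})=q^{2r}-q^{r+1}$ hyperplanes; the remaining $q^{r+1}-q^r$ give rank $r$. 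Note that these rank counts are the \emph{same} in all congruence cases; only the dictionary from rank to intersection number (which sign of $\pm$ occurs, and, for $r$ even, the further hyperbolic/elliptic split of the rank-$r$ class) depends on $r\bmod 4$ and $q\bmod 4$, so the enumeration does not ``naturally split into the three cases'' as you suggest. Two further cautions: your two consistency identities are only two equations and cannot by themselves determine three unknown class sizes, so they cannot substitute for the rank count; and the class of hyperplanes through $P_\infty$ with character $q^{2r-3}$ has size $\frac{q^{2r}-1}{q^2-1}-1$, not $\frac{q^{2r}-1}{q^2-1}$, since $\Pi_\infty$ itself meets the affine set $\cB(a,b)$ in $0$ points; this $-1$ is needed to recover the stated value $A_{q^{2r-1}-q^{2r-3}}=q^{2r}-q^2+\cdots$.
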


\begin{proof}
We begin by proving the following lemma.
\begin{lemma}\label{corol}
  The number of hyperplanes $N_j$ meeting $\cB(a,b)$ in exactly $j$
  points are as follows:
  \begin{enumerate}[{\rm (a)}]
  \item\label{CaA}
  For $r \equiv 1\pmod{4}$,
                   or $r$ odd and $q\equiv 1\pmod{4}$
   \[ N_{q^{2r-3}+q^{3(r-1)/2}-q^{(3r-5)/2}}=q^r,\qquad
      N_{q^{2r-3}}=\frac{q^{2r}-1}{q^2-1}-1+q^{2r}-q^{r+1},\]
      \[
       N_{q^{2r-3}-q^{(3r-5)/2}}=q^{r+1}-q^r.
         \]

\item \label{CaB}For $r\equiv 3\pmod4$ and $q\equiv 3\pmod4$
 \[ N_{q^{2r-3}+q^{(3r-5)/2}}=q^r,\qquad
      N_{q^{2r-3}}=\frac{q^{2r}-1}{q^2-1}-1+q^{2r}-q^{r+1},\]
      \[
       N_{q^{2r-3}-q^{3(r-1)/2}+q^{(3r-5)/2}}=q^{r+1}-q^r.
         \]
\item \label{CaC} For  $r$ even,
\[ N_{q^{2r-3}-q^{(3r-4)/2}}=\frac{1}{2}(q^{r+1}-q^r) \qquad
       N_{q^{2r-3}}=q^r+\frac{q^{2r}-1}{q^2-1}-1+q^{2r}-q^{r+1},\]
       \[
       N_{q^{2r-3}+q^{3(r-4)/2}}=\frac{1}{2}(q^{r+1}-q^r).
       \]
\end{enumerate}
\end{lemma}
\begin{proof} From the proof of Theorem \ref{mb1} it follows that in order to prove  Cases (\ref{CaA}) and (\ref{CaB}) we need to count the number of vectors $v:=(m_1^0,m_1^1,\ldots,m_{r-1}^0,m_{r-1}^1,d_1)\in\GF(q)^{2r-1}$ such that the matrix
 \[ A:=\begin{pmatrix}
   A_{\infty} & \begin{array}{c} m_1^0 \\ m_1^1 \\ \vdots \\ m_{r-1}^1 \end{array} \\
   \begin{array}{cccc} m_1^0 & m_1^1 & \cdots & m_{r-1}^1 \end{array} & d_1
   \end{pmatrix} \]
 of $\cQ$ with equation \eqref{eqodd1} has respectively rank $r-1$, $r$ or
$r+1$.

We  observe  that
$A$ has rank $r-1$ if, and only if, there exist a scalar $\lambda$
such that for all
$i=1,\ldots,r-1$ we have
$m_i^1=\lambda m_i^0$; also, the value of $d_1$ turns out to be uniquely
  determined.
  Thus, the number of distinct possibilities for the parameters $m_1,\ldots,m_{r-1},d$
  is exactly $q^r$. The rank of the  matrix of $\cQ$ is at least
  $r$ in the remaining $q^{2r}-q^{r}$ cases.
  Suppose it to be $r+1$. This means that the column $(m_1^0,m_1^1,\ldots,m_{r-1}^0,m_{r-1}^1)^T$
  is linearly independent from the columns of $A_{\infty}$; so, there are
  $q^{2r-2}-q^{r-1}$ ways to choose $m_1^0,\ldots,m_{r-1}^1$. Furthermore, for any such choice
  the vector $v=(m_1^0,\ldots,m_{r-1}^1,d_1)$ is also independent from the first $2r-2$ rows of
  $A$. So the overall number of planes with such property is $q^2(q^{2r-2}-q^{r-1})=q^{2r}-q^{r+1}$.
  The remaining $q^{r+1}-q^r$ choices yield a matrix of rank $r$.

  In Case (\ref{CaC}), again from the proof of Theorem \ref{mb1}
  when $r$ is even, we need to count how  often $\cQ$ with equation \eqref{eqodd1} turns out to be elliptic
  rather than hyperbolic.
  For any choice of the parameters $m_1,\ldots,m_{r-1},d$ there is
  exactly one quadric $\cQ$ to consider.
  As $\cQ_{\infty}$ is always a parabolic quadric, we can
  assume it to be fixed.
  Denote by $\sigma^0,\sigma^+,\sigma^-$ respectively the number of
  quadrics $\cQ$ which are parabolic, elliptic or hyperbolic. Clearly
  $\sigma_0$ corresponds to the case in which $\rank(\cQ)=\rank(\cQ_{\infty})$ or
  $\rank(\cQ)=\rank(\cQ_{\infty})+2$.
  We have
  \[ \sigma^++\sigma^0+\sigma^-=q^{2r},\qquad \sigma^0=q^{2r}-q^{r+1}+q^r. \]
  Each point of $\cB(a,b)$ lies on $\frac{q^{2r}-1}{q^2-1}$ hyperplanes;
  of these $\frac{q^{2r-2}-1}{q^2-1}$ pass through $P_{\infty}$ (and
  they must be discounted).
  Thus, we get
  \begin{multline*}
   q^{2r-2}|\cB|=q^{4r-3}=\sigma^0q^{2r-3}+\sigma^+(q^{2r-3}+q^{(3r-4)/2})+\sigma^-(q^{2r-3}-q^{(3r-4)/2})= \\
    q^{2r-3}(\sigma^0+\sigma^++\sigma^-)+q^{(3r-4)/2}(\sigma^+-\sigma^-)=
    q^{4r-3}+(\sigma^+-\sigma^-)q^{(3r-4)/2}.
  \end{multline*}
  Hence, $\sigma^+=\sigma^-=\frac{1}{2}(q^{r+1}-q^r)$.
\end{proof}
If we regard $\cB(a,b)$ as a set of points in $\PG(r,q^2)$, then we can consider
the projective code $\cC$  of length $q^{2r-1}$ and dimension $r+1$ generated from $\cB(a,b)$. Denote by $A_j$ the number
of codewords of $\cC$ of weight $j$. Observe that a hyperplane $\pi$
meeting $\cB(a,b)$ in $n$ points always determines $(q^2-1)$ codewords
of weight $q^{2r-1}-n$.
As
the hyperplane at infinity is disjoint from $\cB(a,b)$, we have
\[ A_{q^{2r-1}}=(q^2-1). \]
The remaining weights follow from Lemma~\ref{corol}.
This completes the proof of Theorem~\ref{mb3p}.

\end{proof}

\section{$3$-character multisets in $\PG(r,q^2)$, $r$ even and $3$-weight codes}

\label{tcha}
We keep all previous notation. In \cite[Theorem 4.1]{AK} it is shown that  for $r=2$, $q$ odd and
$4a^{q+1}+(b^q-b)^2\neq 0$ or $r=2$, $q$ even
and $\Tr_{q}(a^{q+1}/(b^q+b)^2)=1$, the set $\cB(a,b)$ can be completed
to a $2$--character  multiset $\overline{\cB}(a,b)$ yielding a two-weight code.

Here we prove that using a similar technique we can construct two
infinite families of three-weight codes. The construction is as follows.

 Let $r$ be even.
  Suppose that either $q$ is
  odd with $4a^{q+1}+(b^q-b)^2$  a non-zero square in $\GF(q)$ or
  $q$ is even and $\Tr_{q}(a^{q+1}/(b^q+b)^2)=1$.
 From Theorem \ref{mb1}  $\cB(a,b)$ is a set of $q^{2r-1}$ points of
  $\AG(r,q^2)$ with characters $q^{2r-3}-q^{r-2}$, $q^{2r-3}$, $q^{2r-3}-q^{r-2}+q^{r-1}$.

Now consider the multiset $\overline{\cB}(a,b)$ in $\PG(r,q^2)$ arising
from $\cB(a,b)$ by assigning multiplicity larger than $1$ to the point $P_{\infty}$.

More in detail  the points of the  $3$--character multiset
$\overline{\cB}(a,b)$  are exactly those of $\cB(a,b)\cup\{P_{\infty}\}$
where each affine point of ${\cB}(a,b)$  has multiplicity one, and
$P_{\infty}$   has multiplicity $j$.
In this way  $\overline{\cB}(a,b)$ turns out to have the following characters:
 \[j,q^{2r-3}+j, q^{2r-3}-q^{r-2}, q^{2r-3}-q^{r-2}+q^{r-1}.\]

The linear code $\cC$
 associated with $\overline{\cB}(a,b)$.
 is a
$[q^{2r-1}+j, r+1]_{q^2}$  code with weights
\[ q^{2r-1}, q^{2r-1}-q^{2r-3},q^{2r-1}-q^{2r-3}+q^{r-2}+j,q^{2r-1}-q^{2r-3}+q^{r-2}-q^{r-1}+j \]
For $j=q^{r-1}-q^{r-2}$ of $j=q^{2r-3}-q^{r-2}$ this is a $3$--weight code.

For $j=q^{r-1}-q^{r-2}$, the only hyperplane meeting $\overline{\cB}(a,b)$ in $j$ points is
that at infinity; thus $N_{q^{r-1}-q^{r-2}}=1$;
the hyperplanes  meeting  $\overline{\cB}(a,b)$ in $q^{2r-3}-q^{r-2}+q^{r-1}$ points are the hyperplanes passing through $P_{\infty}$ together with the hyperplanes for which the corresponding quadric $\cQ$ of equation \eqref{eqeven1} is singular.
Therefore,
 \[N_{q^{2r-3}-q^{r-2}+q^{r-1}}=\frac{q^{2r}-1}{q^2-1}+q^{2r-1}.\]
The remaining hyperplanes intersect $\overline{\cB}(a,b)$ in $q^{2r-3}-q^{r-2}$  points and hence
\[N_{q^{2r-3}-q^{r-2}}=q^{2r}-q^{2r-1}-1. \]
Thus  the  weight enumerator of $\cC$ is 
  $w(x):=\sum_i A_ix^i$ where
  \[ A_0=1,\qquad A_{q^{2r-1}}=q^2-1 \]
  and all of the remaining $A_i$'s are $0$ with the exception of
 \[ A_{q^{2r-1}-q^{2r-3}}=q^{2r}-1+q^{2r-1}(q^2-1), \quad
     A_{q^{2r-1}-q^{2r-3}+q^{r-1}}=(q^{2r}-q^{2r-1}-1)(q^2-1). \]
For $j=q^{2r-3}-q^{r-2}$ a similar argument gives
\[ N_{2q^{2r-3}-q^{r-2}}=\frac{q^{2r}-1}{q^2-1},\qquad
 N_{q^{2r-3}-q^{r-2}+q^{r-1}}=q^{2r-1}, \]
\[ N_{q^{2r-3}-q^{r-2}}=q^{2r}-q^{2r-1}. \]
In this case   the  weight enumerator of $\cC$ is
  $w(x):=\sum_i A_ix^i$ where
  \[ A_0=1,\qquad  A_{q^{2r-1}}=q^{2r}-1, \]
  and all of the remaining $A_i$'s are $0$ with the exception of
    \[ A_{q^{2r-1}-q^{2r-3}}=(q^2-1)q^{2r-1}, \quad
     A_{q^{2r-1}-q^{r-1}}=q^{2r-1}(q-1)(q^2-1). \]

\vskip.2cm\noindent
\begin{minipage}[t]{\textwidth}
Authors' addresses:
\vskip.2cm\noindent\nobreak
\centerline{
\begin{minipage}[t]{8cm}
Angela Aguglia \\
Department of Mechanics, Mathematics \\ and Management \\
Politecnico di Bari \\
Via Orabona 4, I-70126 Bari (Italy) \\
angela.aguglia@poliba.it\\
\end{minipage}\hfill
\begin{minipage}[t]{7cm}
Luca Giuzzi\\
D.I.C.A.T.A.M. \\ Section of Mathematics \\
Universit\`a di Brescia\\
Via Branze 43, I-25123, Brescia  (Italy) \\
luca.giuzzi@unibs.it
\end{minipage}}
\end{minipage}

\end{document}